\newcommand{\bn}{{\mathbb{N}}}
\newcommand{\br}{{\mathbb{R}}}
\newcommand{\bz}{{\mathbb{Z}}}
\newcommand{\bc}{{\mathbb{C}}}
\newcommand{\cd}{{\mathcal{D}}}
\newcommand{\cp}{{\mathcal{P}}}
\renewcommand{\a}{\alpha}
\renewcommand{\b}{\beta}
\renewcommand{\l}{\lambda}
\newcommand{\s}{\sigma}
\renewcommand{\d}{\delta}
\renewcommand{\o}{\omega}
\newcommand{\g}{\gamma}
\newcommand{\ep}{\varepsilon}
\newcommand{\lp}{\left(}
\newcommand{\rp}{\right)}
\newcommand{\lt}{\left\{}
\newcommand{\rt}{\right\}}
\DeclareMathOperator{\Sp}{Sp}
\DeclareMathOperator{\tr}{tr}
\numberwithin{equation}{section}
\newtheorem{theorem}{Theorem}[section]
\newtheorem{lemma}[theorem]{Lemma}
\newtheorem{proposition}[theorem]{Proposition}
\theoremstyle{definition}
\newtheorem{remark}[theorem]{Remark}
\begin{document}

\title[Borg--Hochstadt theorem]
{On stability in the Borg--Hochstadt theorem for periodic Jacobi matrices}
\author[L. Golinskii]{L. Golinskii}

\address{B. Verkin Institute for Low Temperature Physics and
Engineering, 47 Science ave., Kharkiv 61103, Ukraine}
\email{golinskii@ilt.kharkov.ua}

\date{\today}

\keywords{periodic Jacobi matrices, spectrum, bands and gaps, Hill discriminant, trace formulae}
\subjclass[2010]{47B36, 47B39}

\maketitle

\begin{abstract}
A result of Borg--Hochstadt in the theory of periodic Jacobi matrices states that such a matrix has constant diagonals as long as all
gaps in its spectrum are closed (have zero length). We suggest a quantitative version of this result by proving the two-sided bounds
between oscillations of the matrix entries along the diagonals and the length of the maximal gap in the spectrum.
\end{abstract}

\vspace{0.5cm}
\begin{center}
{\large\it To V. A. Marchenko on occasion of his 95-th anniversary}
\end{center}

\vspace{0.5cm}
\section*{Introduction}
\label{s0}

In his recent paper \cite{ma14} V. A. Marchenko reverts to the classical subject of periodic Jacobi matrices. He gives an intrinsic description of
polynomials with all their $\pm2$-points real and so obtains parametrization of the Hill discriminants of such matrices. His argument
is straightforward and makes no appeal to conformal mappings.

The Hill discriminants play a crucial role in the problem related to periodic Jacobi matrices we address in this note. Recall that Jacobi matrices are
two-sided, infinite, three-diagonal matrices of the form
\begin{equation}\label{defjac}
%\begin{split}
J =
\begin{bmatrix}
\ddots & \ddots & \ddots &  & \\
 & a_{-1} & b_0 & a_0 &  & \\
 & & a_0 & b_1 & a_1 & & \\
 & &  & a_1 & b_2 & a_2 & \\
 & &  & & \ddots & \ddots & \ddots
\end{bmatrix}, \quad b_j\in\br, \quad a_j>0.
%\end{split}
\end{equation}
$J$ is said to be {\it periodic of period} $p\in\bn:=\{1,2,\ldots\}$, if
$$ a_{j+p}=a_j, \quad b_{j+p}=b_j, \quad j\in\bz:=\{0,\pm1,\pm2,\ldots\}. $$

A $p$-periodic Jacobi matrix $J$ \eqref{defjac} generates in an obvious way a bounded, self-adjoint, linear operator $J$ on the Hilbert space $\ell^2(\bz)$.
Its spectrum $\s(J)$ is known to have a banded structure, i.e., it is composed of $p$ {\it spectral bands} (closed intervals)
\begin{equation}\label{spect}
\s(J)=\bigcup_{j=0}^{p-1} [\mu_j^+, \mu_{j+1}^-], \ \ \mu_0^+<\mu_1^-\le\mu_1^+<\ldots <\mu_{p-1}^-\le\mu_{p-1}^+< \mu_{p}^-,
\end{equation}
some of them can merge. A convex hall of the spectrum (the least interval that contains the whole spectrum) is $L=[\mu_0^+, \mu_p^-]$.

The bands are interspersed with (interior) {\it gaps}
\begin{equation}\label{gap}
\g_j:=(\mu_j^-,\mu_j^+), \quad j=1,2,\ldots,p-1, \quad \mu_j^- \le \mu_j^+,
\end{equation}
of the length $|\g_j|=\mu_j^+-\mu_j^-$, and $\mu_j^- = \mu_j^+$ means that the gap is closed (the adjacent bands merge).
Yet it seems advisable viewing a closed gap as an actual gap
(of zero length) rather than dealing with two merging bands as a single one. We stick to this viewpoint consistently throughout the paper.
We observe the situation, when the closed gaps arise, in the simplest example of constant Jacobi matrices $J_0$ with
$a_j=a_0$, $b_j=b_0$, $j\in\bz$. Now all gaps are closed, and the spectrum $\s(J_0)=[b_0-2a_0, b_0+2a_0]$ is a single interval.
Following the above convention, we can (and will) view such matrices as periodic of period $p\in\bn$.

The well-known result of Borg--Hochstadt \cite{hoch75, hoch84}, (see \cite[Theorem 5.4.21]{simsz}), states, that the converse is also true. Precisely,
a periodic Jacobi matrix $J$ with all gaps closed is constant, $J=J_0$. The problem we address here is stability (or a quantitative version)
of this result. Specifically, we show that for periodic Jacobi  matrices with the ``small'' variation of parameters a's and b's, the gaps in
their spectra are ``small'', and vice versa.

To be more formal, given a bounded sequence $c=\{c_j\}_{j\in\bz}$ of real numbers, its {\it variation} is defined by
\begin{equation}\label{var}
\o_c:=\sup_{i,j\in\bz}(c_i-c_j).
\end{equation}
Let $\g$ be a maximal gap in the spectrum of $J$, so
\begin{equation}\label{maxgap}
|\g|=\max_{1\le j\le p-1} |\g_j|.
\end{equation}

Here is the main result of the paper.

\begin{theorem}\label{main}
Let $J$ be a periodic Jacobi matrix $\eqref{defjac}$ of period $p$. Then
\begin{equation}\label{upbo}
\o_b\le \frac{p(p-1)}2\,|\g|, \qquad \qquad \o_a\le p^2\sqrt{p}\,|\g|.
\end{equation}
Furthermore,
\begin{equation}\label{lobo}
\o_b+\o_a\ge \frac{|\g|}4\,.
\end{equation}
\end{theorem}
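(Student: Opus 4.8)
The plan is to obtain \eqref{lobo} from a soft operator-theoretic estimate and the two bounds in \eqref{upbo} from a hands-on analysis of the one-period transfer matrix of $J$ and of its cyclic shifts, the latter being where the work lies. For \eqref{lobo}, let $J_0$ be the constant Jacobi matrix whose diagonal entries all equal $b_0$ and whose off-diagonal entries all equal $a_0$, so that $\s(J_0)=[b_0-2a_0,\,b_0+2a_0]$ is a single interval. Then $J-J_0$ is a two-sided self-adjoint tridiagonal operator whose diagonal entries $b_j-b_0$ have modulus $\le\o_b$ and whose off-diagonal entries $a_j-a_0$ have modulus $\le\o_a$, so $\|J-J_0\|\le\o_b+2\o_a$ (the $2$ being the standard norm bound for a symmetric tridiagonal operator with zero diagonal and off-diagonal entries of modulus $\le1$). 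The Hausdorff distance between the spectra of two bounded self-adjoint operators does not exceed their operator-norm distance, so $\s(J)$, and hence its convex hull, lies in the $\|J-J_0\|$-neighbourhood of the interval $\s(J_0)$; the midpoint of any gap $\g_j$ then lies within $\|J-J_0\|$ of $\s(J)$, which is impossible unless $|\g_j|\le 2\|J-J_0\|\le 2\o_b+4\o_a\le 4(\o_b+\o_a)$. Passing to the maximal gap gives \eqref{lobo}.

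For the upper bounds, write $A_n(z)=\left(\begin{smallmatrix}(z-b_n)/a_n & -a_{n-1}/a_n\\ 1 & 0\end{smallmatrix}\right)$ and let $M^{(k)}(z)=A_{k+p-1}(z)\cdots A_{k}(z)$ be the monodromy matrix of $J$ read from the site $k$. Successive monodromies are conjugate, $M^{(k+1)}=A_kM^{(k)}A_k^{-1}$, so $\tr M^{(k)}(z)=\Delta(z)$ is the Hill discriminant and is independent of $k$; in particular the band edges $\mu_j^\pm$ are unchanged under a shift of $J$. The entry $[M^{(k)}(z)]_{12}$ is a polynomial of degree $p-1$, and at each of its zeros $[M^{(k)}]_{11}[M^{(k)}]_{22}=1$ while $[M^{(k)}]_{11}+[M^{(k)}]_{22}=\Delta$, so $|\Delta|\ge2$ there; a standard interlacing/counting argument then places its $p-1$ real zeros $\l_1^{(k)}<\dots<\l_{p-1}^{(k)}$ — the Dirichlet data of $J$ seen from the site $k$ — one in the closure $[\mu_j^-,\mu_j^+]$ of each gap. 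Matching the two leading coefficients of $[M^{(k)}(z)]_{12}$ against those of $\Delta(z)$ yields the trace identity $\sum_{i\ne k}b_i=\sum_{j=1}^{p-1}\l_j^{(k)}$, whence $b_{k'}-b_k=\sum_{j=1}^{p-1}(\l_j^{(k)}-\l_j^{(k')})$ and $|b_k-b_{k+1}|\le\sum_{j=1}^{p-1}|\g_j|\le(p-1)|\g|$ for every $k$. The variation of the $p$-periodic sequence $(b_j)$ over one period — the sum of the $p$ numbers $|b_{k+1}-b_k|$ — is at least $2\o_b$, since a closed walk has equal total ascent and descent and each is at least $\o_b$; summing the above estimates gives $2\o_b\le p(p-1)|\g|$, which is the first inequality of \eqref{upbo}.

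For $\o_a$ the entries $a_j$ enter the transfer matrix multiplicatively and only through subleading coefficients: $[M^{(k)}(z)]_{12}$ has leading coefficient $-a_{k-1}\big/\prod_i a_i$, and $\prod_i a_i$, being the reciprocal of the leading coefficient of $\Delta$, is fixed by the band structure. Combining the identity $[M^{(k)}(z)]_{12}=-\big(a_{k-1}\big/\prod_i a_i\big)\prod_{j=1}^{p-1}(z-\l_j^{(k)})$ with the unimodularity $\det M^{(k)}(z)\equiv1$ and the shift-covariance $M^{(k+1)}=A_kM^{(k)}A_k^{-1}$, one can express $\log a_{k-1}$ through the band edges and the gap-confined parameters $\l_j^{(\cdot)}$, estimate $|a_k-a_{k+1}|$ by a sum over the $p$ gaps of increments each comparable to the relevant $|\g_j|$, and sum around the period as before. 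This is the crux of the matter and the main obstacle: since the $a_j$ occur multiplicatively and away from the leading term, one must preclude the spurious growth that would arise when a short gap lies close to a band edge or to a neighbouring gap, all while keeping every constant polynomial in $p$. A Cauchy–Schwarz step over the $p$ gaps accounts for the extra factor $\sqrt p$ relative to the $\o_b$-estimate and yields the stated constant $p^2\sqrt p$ in \eqref{upbo}.
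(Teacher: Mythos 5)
Your treatment of the lower bound \eqref{lobo} and of the diagonal bound $\o_b\le \frac{p(p-1)}2\,|\g|$ is sound and essentially coincides with the paper's: the paper proves your Hausdorff-distance step as a lemma (via resolvent estimates) for a perturbation of an operator with interval spectrum and applies it with $\|J-J_{\a,\b}\|\le\o_b+2\o_a$; and for $\o_b$ it uses exactly the Dirichlet data $\xi_j^{(m)}\in[\mu_j^-,\mu_j^+]$ together with the trace formula $b_m-b_{m-1}=\sum_{j}\bigl(\xi_j^{(m)}-\xi_j^{(m+1)}\bigr)$, so your closed-walk summation giving $2\o_b\le p(p-1)|\g|$ is an equivalent way to finish.

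The genuine gap is the bound $\o_a\le p^2\sqrt p\,|\g|$, which is the main point of the theorem and of the paper. For it you offer only a plan: that one ``can express $\log a_{k-1}$ through the band edges and the gap-confined parameters,'' that $|a_k-a_{k+1}|$ can be estimated ``by a sum over the $p$ gaps of increments each comparable to the relevant $|\g_j|$,'' and that a Cauchy--Schwarz step produces the factor $\sqrt p$. None of these claims is substantiated, and the middle one is precisely the difficulty: the off-diagonal entries enter the spectral data multiplicatively (through leading coefficients and through values, not roots, of polynomials), and there is no evident Lipschitz-type control of $a_k$ by the Dirichlet data with constants polynomial in $p$; your own text concedes one ``must preclude the spurious growth'' without saying how. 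The paper's actual route is quite different and is where the work lies: it introduces the sup norm $2M$ of the Hill discriminant $\cd$ on the convex hull $L$ of the spectrum, derives $|\g|\ge \frac{|L|}{p^2}\bigl(\frac{M-1}{M}\bigr)^{1/2}$ by a Taylor expansion at an interior critical point combined with A.~Markov's inequality for $\cd''$, and then splits into the cases $M\ge 2$ (where the trace formula $\sum_j d_j^2=\sum_j(b_j^2+2a_j^2)$ and $|d_j|\le|L|$ already give $\max_j a_j< p^2\sqrt p\,|\g|$) and $1<M<2$, where Theorem KK applied to the balanced polynomial $a^p\cd$, the AGM inequality, the bound $u^{\tau}-1\le\tau(u-1)$, and the Korotyaev--Krasovsky estimate $|L|\ge 4a$ yield $|a_j-s_a|\le\frac{p^2}2\,|\g|$. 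Without this machinery, or some equally concrete substitute for your ``comparable to $|\g_j|$'' step, the second inequality in \eqref{upbo} remains unproven, so the proposal does not establish the theorem.
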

In particular, $|\g|=0$ (all gaps are closed) if and only if $\o_a=\o_b=0$ ($J$ is a constant Jacobi matrix), so the result of Borg--Hochstadt follows.
In the case of periodic Jacobi matrices the supremum in the definition \eqref{var} is obviously attained for the matrix entries take only finite number
of values.

A periodic Jacobi matrix is said to be {\it normalized} if
\begin{equation}\label{nor}
\sum_{j=1}^p b_j=0.
\end{equation}
Clearly, each Jacobi matrix can be normalized by adding an appropriate constant to all b's. Under such transformation (shift) neither the variation of
the entries along diagonals, nor the length of the maximal gap alters, so throughout the paper we assume \eqref{nor} to hold.

As in \cite{ma14}, the argument is by and large elementary, and relies upon some basic properties of the
Hill discriminant with regard to the spectrum $\s(J)$, see \cite[Chapter 7]{tesc}, \cite[Chapter 5]{simsz}.

A key ingredient of the proof is the extremal problem ``${\rm\grave{a}}$ la'' the Chebyshev Alternance Theorem,
suggested by Korotyaev--Kutsenko \cite[Lemma 2.2]{koku09}.

Let $x=(x_1,\ldots,x_n)\in\br^n$, $n\ge2$, and put
$$ X(\l)=X(\l,x):=\prod_{j=1}^n (\l-x_j). $$
A set $\cp_n(c)\subset\br^n$ is defined by imposing the following conditions: \newline
$x=(x_1,\ldots,x_n)\in\cp_n(c)$ if and only if
\begin{enumerate}
  \item $x_1+\ldots+x_n=0$;
  \item $|X(\l)|\le c, \quad \l\in[\min_j x_j, \max_j x_j]$.
\end{enumerate}
The result of Korotyaev--Kutsenko states that
\begin{equation}\label{koku}
\sup\{ \|x\|^2=x_1^2+\ldots+x_n^2:\  x\in\cp_n(c)\}=2n\lp\frac{c}2\rp^{2/n}.
\end{equation}

It is instructive to paraphrase this result in a quantitative form. An algebraic polynomial $P$ with the roots $\{x_j\}$ is {\it balanced}, if
$x_1+\ldots+x_n=~0$.

{\bf Theorem KK}.
{\it
Let
$$ P(x)=\prod_{j=1}^n (x-x_j)=x^n+\s_2 x^{n-2}+\ldots, \quad x_1\le\ldots\le x_n, $$
be a balanced polynomial with the real roots $\{x_j\}$. Then
\begin{equation}\label{quankoku}
\sum_{j=1}^n x_j^2\le 2n\lp\frac{\|P\|}2\rp^{2/n}\,, \qquad \|P\|:=\max_{x\in [x_1,x_n]}|P(x)|.
\end{equation}
}

\smallskip

The work on this note was inspired to a large extent by a recent manuscript \cite{kuku}, wherein the authors examine the notion of
$\ep$-pseudospectrum in Jacobi matrices setting. As all operators there are self-adjoint, the $\ep$-pseudospectrum of an operator $J$ agrees with
the $\ep$-neighborhood of the actual spectrum $\s(J)$. That is why the $\ep$-pseudospectrum is connected if and only if $|\g|\le 2\ep$. So, up to
distinction in terminology, \cite{kuku} contains the upper bound for $\o_b$ and the lower bound \eqref{lobo} (even with better constants).
The main contribution of this note is a new upper bound for $\o_a$ in \eqref{upbo}, which answers a question posed in \cite[Remark 3.3]{kuku}.

We illustrate our results on the example of $4$-periodic Jacobi matrices.

{\bf Acknowledgement}. The author thanks I. Egorova and A. Kutsenko for valuable discussions.

\section{Proof of the main result: upper bounds}
\label{s1}

We recall some rudiments of the theory of Jacobi matrices \cite{tesc, simsz}.

The basic recurrence relation for Jacobi matrices \eqref{defjac} is of the form
\begin{equation}\label{recur}
Jy=\l y \ \sim \ a_{n-1}y_{n-1}+b_ny_n+a_ny_{n+1}=\l y_n, \quad \l\in\bc, \quad n\in\bz,
\end{equation}
and a pair of special solutions of \eqref{recur}, $\{c_n(\cdot,m)\}_{n\in\bz}$ and $\{s_n(\cdot,m)\}_{n\in\bz}$, $m\in\bz$, with the initial data
\begin{equation}\label{spesol}
\begin{split}
s_{m-1}(\l,m) &=0,  \qquad \quad s_m(\l,m)=1, \\
c_{m-1}(\l,m) &=-1, \qquad c_m(\l,m)=0,
\end{split}
\end{equation}
is of particular concern. It is clear that $s_{m+k}(\cdot,m)$ ($c_{m+k}(\cdot,m)$) is the algebraic polynomial of degree $k$ ($k-1$), respectively.

The polynomial $s_{m+p-1}$ is crucial for the theory of periodic Jacobi matrices. Its zeros $\{\xi_j^{(m)}\}_{j=1}^{p-1}$ are
known to be simple and real, and
\begin{equation*}
\begin{split}
a^p s_{m+p-1}(\l,m) &=\prod_{j=1}^{p-1}\lp\l-\xi_j^{(m)}\rp=\l^{p-1}-\l^{p-2}\,\sum_{j=0}^{p-2}b_{m+j}+\ldots, \\
a &:=(a_1\ldots a_p)^{1/p}.
\end{split}
\end{equation*}
In view of normalization \eqref{nor} and periodicity,
$$ \sum_{j=1}^{p-1}\xi_j^{(m)}=\sum_{j=0}^{p-2}b_{m+j}=-b_{m+p-1}=-b_{m-1}, $$
so the following trace formula is valid
\begin{equation}\label{tr1}
b_m-b_{m-1}=\sum_{j=1}^{p-1} \lp\xi_j^{(m)}-\xi_j^{(m+1)}\rp.
\end{equation}

On the other hand, for each $m\in\bz$ the zero $\xi_j^{(m)}\in\bar\g_j=[\mu_j^-,\mu_j^+]$, see, e.g., \cite[formula (7.50)]{tesc},
\cite[Theorem 5.4.16]{simsz}, so
$$ |b_m-b_{m-1}|\le \sum_{j=1}^{p-1} \left|\xi_j^{(m)}-\xi_j^{(m+1)}\right|\le\sum_{j=1}^{p-1}|\g_j|\le (p-1)|\g|, $$
and the upper bound for $\o_b$ in \eqref{upbo} follows.

\smallskip

The upper bound for $\o_a$ is much more intricate. Now the Hill discriminant $\cd$ appears on the central stage (see \cite[Section 5.4]{simsz}
for a detailed account of discriminants and their properties). This object arises in two guises:
as a difference of the special solutions of \eqref{recur} (or, equivalently, the trace of the monodromy matrix)
\begin{equation*}
\cd(\l)=s_{m+p}(\l,m)-c_{m+p-1}(\l,m),
\end{equation*}
and as a characteristic polynomial of a certain Hermitian matrix
\begin{equation*}
a^p\,\cd(\l)=\det(\l-\Phi_m)=\prod_{j=1}^p(\l-d_j),
\end{equation*}
where
\begin{equation}\label{matsym}
\Phi_m=
\begin{bmatrix}
b_m & a_m &  &  &  & a_{m+p-1}i \\
a_m & b_{m+1} & a_{m+1} &  &  & \\
& a_{m+1} & b_{m+2} & a_{m+2} & & \\
&  & \ddots & \ddots & \ddots & \\
&  &  & a_{m+p-2} & b_{m+p-2} & a_{m+p-2} \\
-a_{m+p-1}i & & & & a_{m+p-1} & b_{m+p-1}
\end{bmatrix}.
\end{equation}
As a matter of fact, $\cd$ does not depend on $m$, and neither do its zeros $\{d_j\}$. By normalization \eqref{nor} and \eqref{matsym},
$\cd$ is a balanced polynomial of degree $p$.

The zeros $\{d_j\}_{j=1}^p$ of $\cd$ (or, equivalently, the eigenvalues of $\Phi_m$) are known to be real and simple.
Evaluating of the trace of $\Phi_m^2$ in two ways provides, in view of periodicity, another trace formula
\begin{equation}\label{trfo1}
\tr\Phi_m^2=\sum_{j=1}^p d_j^2=\sum_{j=1}^p (b_j^2+2a_j^2).
\end{equation}

There is a tight relation between the Hill discriminant and the spectrum of the underlying Jacobi matrix $J$. Precisely,
$$ \s(J)=\cd^{(-1)}[-2,2], \qquad |\cd(\mu_j^{\pm})|=2, $$
so $\s(J)$ is the inverse image of the interval $[-2,2]$, and $|\cd|=2$ at all endpoints of the spectrum. In other words, the set
$\{\mu_j^{\pm}\}_j$ constitutes a full collection of $\pm2$-points of the Hill discriminant.

Denote by $2M$ the sup-norm of the discriminant over the convex hall of the spectrum
$$ 2M=\|\cd\|=\|\cd\|_{C(L)}, \qquad L=[\mu_0^+, \mu_p^-]. $$
Then, $M\ge1$, and by the Borg--Hochstadt theorem, $M=1$ if and only if $|\g|=0$, so $J$ is the constant matrix. We exclude this
case in what follows, so $M>1$. The discriminant attains this extremum at some critical point $\nu_k\in\g_k=(\mu_k^-,\mu_k^+)$.
By the Taylor formula, there is $\xi\in[\mu_k^-,\mu_k^+]$ so that for $\mu_k^-\le t\le\mu_k^+$ the equality
$$ \cd(t)=\cd(\nu_k)+\cd'(\nu_k)\,(t-\nu_k)+\frac{\cd''(\xi)}2\,(t-\nu_k)^2=\cd(\nu_k)+\frac{\cd''(\xi)}2\,(t-\nu_k)^2 $$
holds. Putting $t=\mu_k^+$ provides
$$ 2(M-1)=\frac{|\cd''(\xi)|}2\,(\mu_k^+-\nu_k)^2, $$
or
$$ (\mu_k^+-\nu_k)^2=\frac{4(M-1)}{|\cd''(\xi)|}\ge \frac{4(M-1)}{\|\cd''\|}\,. $$
By A. Markov's inequality for the interval $L$ \cite[Theorem 5.1.8]{boer},
$$ \|\cd''\|\le \frac{4p^4\,M}{|L|^2}\,, $$
and we end up with the lower bound for the length of the maximal gap
\begin{equation}\label{low1}
|\g|^2\ge (\mu_k^+-\nu_k)^2\ge\frac{|L|^2}{p^4}\cdot\frac{M-1}{M} \ \Rightarrow \ |\g|\ge\frac{|L|}{p^2}\cdot\lp\frac{M-1}{M}\rp^{1/2}.
\end{equation}

We distinguish two cases.

1. The case when $M\ge2$ is simple. We show that not only the variation of $a_j$'s, but the numbers themselves are ``small''. Indeed,
it follows now from \eqref{low1} that
\begin{equation}\label{low2}
|\g|\ge\frac{|L|}{\sqrt2 p^2}\,.
\end{equation}
Note that for an algebraic polynomial of degree $p\ge2$ the sum of its $\a$-points does not depend on $\a$. In particular, for a balanced
polynomial the sum of its $\a$-points is zero. Hence, by \eqref{nor},
$$ \mu_0^+ + \mu_p^-+\sum_{j=1}^{p-1} (\mu_j^-+\mu_j^+)=2\sum_{j=1}^p b_j=0, $$
(the left-hand side being the sum of all $\pm2$-points of $\cd$), so $\mu_0^+<0<\mu_p^-$. Since $d_j\in\s(J)$, we have
$$ |d_j|\le |L|, \qquad j=1,2,\ldots,p, $$
and the trace formula \eqref{trfo1} implies
$$  \sum_{j=1}^p (b_j^2+2a_j^2)=\sum_{j=1}^p d_j^2<p\,|L|^2. $$
In view of \eqref{low2}, we come to the bound
\begin{equation}\label{avar}
\max_j a_j<\sqrt{\frac{p}2}\,|L|\le p^2\sqrt{p}\,|\g|,
\end{equation}
as claimed.

2. Now for the harder part $1<M<2$. The extremal problem of Korotyaev--Kutsenko comes into play here. We apply Theorem~KK to
the balanced polynomial $P=a^p\cd$, to obtain
$$ \sum_{j=1}^p d_j^2\le 2p\lp \frac12\,\|P\|_{C[d_1,d_p]}\rp^{2/p}\le 2p a^2 M^{2/p}. $$
By the trace formula \eqref{trfo1},
$$ \frac1p\,\sum_{j=1}^p a_j^2\le a^2 M^{2/p}, $$
and the arithmetic-geometric means (AGM for short) inequality yields
\begin{equation}\label{upbo2}
\frac1p\,\sum_{j=1}^p a_j^2-\lp\frac1p\,\sum_{j=1}^p a_j\rp^2\le \frac1p\,\sum_{j=1}^p a_j^2-a^2\le a^2(M^{2/p}-1).
\end{equation}
But the left-hand side of \eqref{upbo2} is
$$ \frac1p\,\sum_{j=1}^p a_j^2-\lp\frac1p\,\sum_{j=1}^p a_j\rp^2=\frac1p\,\sum_{j=1}^p (a_j-s_a)^2, \quad s_a:=\frac1p\,\sum_{j=1}^p a_j. $$
To estimate the right-hand side of \eqref{upbo2} from above, we apply the well-known inequality
$$ u^\tau-1\le \tau(u-1), \qquad u>0, \quad 0<\tau\le1. $$
Thus, we come to the upper bound
$$ \frac1p\,\sum_{j=1}^p (a_j-s_a)^2\le \frac{2a^2}{p}\,(M-1). $$

To proceed further, recall the assumption $1<M<2$. From \eqref{low1} we derive
$$ |\g|\ge\frac{|L|}{\sqrt2 p^2}\,\sqrt{M-1}, \quad M-1\le\frac{2p^4}{|L|^2}\,|\g|^2, $$
and so
$$ \frac1p\,\sum_{j=1}^p (a_j-s_a)^2\le \frac{4a^2p^3}{|L|^2}\,|\g|^2. $$
By \cite[Lemma 2.2]{kokr}, $|L|=\mu_p^- -\mu_0^+\ge 4a$, so, finally,
\begin{equation}\label{upbo3}
\frac1p\,\sum_{j=1}^p (a_j-s_a)^2\le \frac{p^3}4\,|\g|^2, \quad |a_j-s_a|\le \frac{p^2}2\,|\g|
\end{equation}
for each $j=1,2,\ldots,p$. The latter inequality obviously implies  \eqref{upbo} for $\o_a$. This proves the
upper bounds in Theorem \ref{main}.

\section{$4$-periodic Jacobi matrices}

Consider an instructive example of $4$-periodic Jacobi matrices $J$ with $b_j\equiv 0$, $j\in\bz$. The Hill discriminant is now
\begin{equation*}
\begin{split}
\cd(\l) &= s_{m+4}(\l,m)-c_{m+3}(\l,m)=a^{-4}(\l^4-\a\l^2+\b), \\
a^4 &=a_1a_2a_3a_4,  \quad \a:=a_1^2+a_2^2+a_3^2+a_4^2, \quad \b:=(a_1a_3)^2+(a_2a_4)^2.
\end{split}
\end{equation*}
The equations $\cd(\l)\pm2=0$ are biquadratic, and their roots are
\begin{equation*}
\begin{split}
\cd(\l)-2 &=0 \ \Rightarrow \ \l=\{\pm\l_1^-, \pm\l_1^+\}, \\
\cd(\l)+2 &=0 \ \Rightarrow \ \l=\{\pm\l_2^-, \pm\l_2^+\}.
\end{split}
\end{equation*}
The above biquadratic equations can be solved explicitly
\begin{equation*}
\l_1^{\pm} = \sqrt{\frac{\a\pm\sqrt{D^+}}2}\,, \qquad   \l_2^{\pm} = \sqrt{\frac{\a\pm\sqrt{D^-}}2}\,,
\end{equation*}
where
\begin{equation*}
\begin{split}
D^+ &:=\a^2-4(\b-2a^4) =\a^2-4(a_1a_3-a_2a_4)^2 \\ &=\lt (a_1-a_3)^2+(a_2+a_4)^2\rt\, \lt (a_1+a_3)^2+(a_2-a_4)^2\rt>0, \\
D^- &:=\a^2-4(\b+2a^4) = \a^2-4(a_1a_3-a_2a_4)^2 \\ &=\lt (a_1-a_3)^2+(a_2-a_4)^2\rt\, \lt (a_1+a_3)^2+(a_2+a_4)^2\rt>0.
\end{split}
\end{equation*}
By the AGM inequality,
\begin{equation}\label{neq1}
\sqrt{D^{\pm}}\le\a,
\end{equation}
so all roots are real.

The spectrum is composed of four spectral bands symmetric with respect to the origin
\begin{equation*}
\begin{split}
\Sp(J) &=[\l_1^-,\l_2^-]\bigcup [\l_2^+,\l_1^+]\bigcup [-\l_2^-,-\l_1^-]\bigcup [-\l_1^+,-\l_2^+], \\
0 &\le\l_1^-<\l_2^-\le \l_2^+<\l_1^+.
\end{split}
\end{equation*}
The bands are separated with three gaps, the interior one $\g_{int}=(-\l_1^-,\l_1^-)$, and two symmetric exterior ones,
$\pm\g_{ext}$,  $\g_{ext}=(\l_2^-,\l_2^+)$.
In view of \eqref{neq1}, the length of $\g_{ext}$ is bounded from below by
\begin{equation*}
\begin{split}
\sqrt2\,|\g_{ext}| &=\sqrt2(\l_2^+-\l_2^-) =\sqrt{\a+\sqrt{D^-}}-\sqrt{\a-\sqrt{D^-}} \\
&=\frac{2\sqrt{D^-}}{\sqrt{\a+\sqrt{D^-}}+\sqrt{\a-\sqrt{D^-}}}\ge \sqrt{\frac{D^-}{2\a}}\,.
\end{split}
\end{equation*}
 Therefore,
\begin{equation*}
\begin{split}
2|\g_{ext}| &\ge \sqrt{\frac{D^-}{\a}} =\sqrt{(a_1-a_3)^2+(a_2-a_4)^2}\,\sqrt{\frac{(a_1+a_3)^2+(a_2+a_4)^2}{\a}} \\
&\ge \sqrt{(a_1-a_3)^2+(a_2-a_4)^2},
\end{split}
\end{equation*}
(see the definition of $\a$), and so
\begin{equation}\label{osc1}
\max(|a_1-a_3|, |a_2-a_4|)\le 2|\g_{ext}|.
\end{equation}
In particular, if the exterior gaps are closed, then $a_1=a_3$, $a_2=a_4$, and the actual period is $2$.

As far as the interior gap goes, we see that
\begin{equation*}
\frac12\,|\g_{int}|=\l_1^-=\sqrt{\frac{\a-\sqrt{D^+}}2}=\sqrt2\,\frac{|a_1a_3-a_2a_4|}{\sqrt{\a+\sqrt{D^+}}}\ge\frac{|a_1a_3-a_2a_4|}{\sqrt{\a}}\,.
\end{equation*}
Note that if the interior gap is closed, then $a_1a_3=a_2a_4$ (the period may still be $4$).

Assume with no loss of generality, that $a_1+a_2\ge a_3+a_4$. Since
$$ 1\le\frac{a_1+a_2+a_3+a_4}{\sqrt{\a}}\le 2, $$
we come to the bounds
\begin{equation}\label{osc2}
\frac12\le\frac{a_1+a_2+a_3+a_4}{2\sqrt{\a}}\le\frac{a_1+a_2}{\sqrt{\a}}\le\frac{a_1+a_2+a_3+a_4}{\sqrt{\a}}\le 2.
\end{equation}
By \eqref{osc2},
$$ \frac{|a_1-a_2|}2\le \frac{a_1+a_2}{\sqrt{\a}}\,|a_1-a_2|=\frac{|a_1^2-a_2^2|}{\sqrt{\a}}. $$
On the other hand, we write
$$ a_1^2-a_2^2=(a_1a_3-a_2a_4)+a_1(a_1-a_3)+a_2(a_4-a_2), $$
to obtain, in view of \eqref{osc1}, \eqref{osc2},
\begin{equation*}
\begin{split}
\frac{|a_1^2-a_2^2|}{\sqrt{\a}} &\le \frac{|a_1a_3-a_2a_4|}{\sqrt{\a}}+
\frac{a_1}{\sqrt{\a}}\,|a_1-a_3|+\frac{a_2}{\sqrt{\a}}\,|a_2-a_4| \\
&\le \frac12\,|\g_{int}|+2|\g_{ext}|\,\frac{a_1+a_2}{\sqrt{\a}}\le\frac12\,|\g_{int}|+4|\g_{ext}|.
\end{split}
\end{equation*}
Hence,
$$ |a_1-a_2|\le |\g_{int}|+8|\g_{ext}|\le 9|\g|, \quad |\g|:=\max(|\g_{int}|,|\g_{ext}| ) $$
is the length of the maximal gap in the spectrum.

Next, by \eqref{osc1},
$$ |a_3-a_4|\le |a_3-a_1|+|a_1-a_2|+|a_2-a_4|\le |\g_{int}|+12|\g_{ext}|, $$
and we end up with the following bound for the variation of the a's diagonal
\begin{equation}\label{osc4}
\o_a\le |\g_{int}|+12|\g_{ext}|\le 13|\g|.
\end{equation}

The opposite inequalities can be proved along the same line of reasoning. Indeed,
\begin{equation*}
\begin{split}
|\g_{int}| &=2\sqrt2\,\frac{|a_1a_3-a_2a_4|}{\sqrt{\a+\sqrt{D^+}}}\le\frac{2\sqrt2}{\sqrt{\a}}\,\lp |a_3-a_4|a_1+|a_1-a_2|a_4\rp \\
&\le 2\sqrt2 \o_a\,\frac{a_1+a_4}{\sqrt{\a}}\le 4\sqrt2\,\o_a,
\end{split}
\end{equation*}
and
\begin{equation*}
\begin{split}
|\g_{ext}| &=\frac{\sqrt{2D^-}}{\sqrt{\a+\sqrt{D^-}}+\sqrt{\a-\sqrt{D^-}}}\le\sqrt{\frac{D^-}{\a}} \\
&\le \sqrt2\o_a\,\sqrt{\frac{(a_1+a_3)^2+(a_2+a_4)^2}{\a}}\le 2\o_a.
\end{split}
\end{equation*}

\section{Proof of the main result: lower bound}
\label{s3}

We suggest here a proof of the lower bound \eqref{lobo}, alternative to one in \cite[Theorem 3.2]{kuku}. It has nothing to do
with periodicity and applies to arbitrary bounded Jacobi matrices. The argument is based on some general facts from the perturbation
theory (perturbation of the spectra).

We say that $(\l,\l')$ is a gap in the spectrum of a bounded, self-adjoint operator $T$ on a Hilbert space, if
$$ (\l,\l')\bigcap\s(T)=\emptyset, \qquad \l,\l'\in\s(T). $$

The result below is likely to be well known. We provide its proof for the sake of completeness.

\begin{lemma}\label{perspe}
Let $T_0$ be a bounded, self-adjoint operator on a Hilbert space with the spectrum $\s(T_0)=[u_0,v_0]$, a single interval.
Let $T$ be a bounded, self-adjoint operator so that $\|T-T_0\|\le\d$. Then the length of each gap $(\mu^-,\mu^+)$ in $\s(T)$ does not
exceed $2\d$.
\end{lemma}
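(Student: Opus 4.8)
The plan is to exploit the spectral mapping / perturbation principle that $\s(T)$ cannot stray far from $\s(T_0)=[u_0,v_0]$, and to show that any open interval disjoint from $\s(T)$ but with endpoints in $\s(T)$ must be short. First I would record the elementary fact that $\|T-T_0\|\le\d$ forces $\s(T)\subset[u_0-\d,v_0+\d]$: indeed, for any $\mu\notin[u_0-\d,v_0+\d]$ the operator $T_0-\mu$ is boundedly invertible with $\|(T_0-\mu)^{-1}\|=\operatorname{dist}(\mu,[u_0,v_0])^{-1}<\d^{-1}$, so a Neumann-series argument shows $T-\mu=(T_0-\mu)(I+(T_0-\mu)^{-1}(T-T_0))$ is invertible, i.e.\ $\mu\in\varrho(T)$.

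Next, suppose $(\mu^-,\mu^+)$ is a gap in $\s(T)$, so $\mu^-,\mu^+\in\s(T)$ and $(\mu^-,\mu^+)\cap\s(T)=\emptyset$. Pick the midpoint $\mu:=(\mu^-+\mu^+)/2$. Since $\mu\notin\s(T)$ and $\s(T)$ is closed, $\operatorname{dist}(\mu,\s(T))=|\g|/2$ where $|\g|:=\mu^+-\mu^-$ (the nearest spectral points to the midpoint of the gap are precisely its endpoints, because everything strictly between is in the resolvent set and everything outside $[\mu^-,\mu^+]$ is at distance $\ge|\g|/2$). The idea now is to contrast this with the corresponding distance for $T_0$: since $\s(T_0)$ is the single interval $[u_0,v_0]$, the distance from $\mu$ to $\s(T_0)$ is at most $\operatorname{dist}(\mu,\s(T))+\d$ only if I first show $\mu$ is genuinely ``inside'' the spectral region, which is where a little care is needed.

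The cleanest route is symmetric: by the same Neumann-series estimate applied with the roles reversed, $\|(T-\mu)^{-1}\|\le\d^{-1}$ would force $\mu\in\varrho(T_0)$, i.e.\ $\mu\notin[u_0,v_0]$. So I argue by dichotomy. If $\mu\notin[u_0,v_0]$, then $\operatorname{dist}(\mu,[u_0,v_0])>0$; but then $\mu\in\varrho(T_0)$ and, reversing the perturbation bound (using $\|T_0-T\|\le\d$), every point within $\d$ of $\mu$ lies in $\varrho(T)$, and in particular a neighbourhood of $\mu$ of radius $\min(\operatorname{dist}(\mu,[u_0,v_0]),\d)$ is gap-compatible — this still needs to be converted into a bound on $|\g|$. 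The more robust formulation, and the one I would actually carry out, is this: from $\s(T)\subset[u_0-\d,v_0+\d]$ and $\mu^-,\mu^+\in\s(T)$ we get $u_0-\d\le\mu^-$ and $\mu^+\le v_0+\d$; simultaneously, applying the first paragraph's inclusion to the pair $(T_0,T)$ in the reverse direction gives $\s(T_0)=[u_0,v_0]\subset(\text{closed }\d\text{-neighbourhood of }\s(T))$, so since the open interval $(\mu^-,\mu^+)$ is disjoint from $\s(T)$, its sub-interval $(\mu^-+\d,\mu^+-\d)$ (if nonempty) is disjoint from the $\d$-neighbourhood of $\s(T)$, hence disjoint from $[u_0,v_0]$; but $[u_0,v_0]$ is connected and contains points (namely within $\d$ of $\mu^\pm\in\s(T)$) on ``both sides,'' a contradiction unless $(\mu^-+\d,\mu^+-\d)=\emptyset$, i.e.\ $\mu^+-\mu^-\le2\d$.

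The main obstacle is precisely the last connectedness step: one must make rigorous that $[u_0,v_0]$, being an interval meeting the $\d$-neighbourhoods of both $\mu^-$ and $\mu^+$, must also meet the $\d$-neighbourhood of any point between them — and then translate ``$[u_0,v_0]$ meets the $\d$-ball around some $t\in(\mu^-,\mu^+)$'' into ``$\operatorname{dist}(t,[u_0,v_0])\le\d$'' and combine with $\operatorname{dist}(t,[u_0,v_0])\ge\operatorname{dist}(t,\s(T))-\d$ via the first inclusion, forcing $\operatorname{dist}(t,\s(T))\le2\d$ for every such $t$; taking $t$ the midpoint gives $|\g|/2\le2\d$, hence $|\g|\le 2\d$ (one loses a factor, so in fact the clean midpoint computation $\operatorname{dist}(\mu,\s(T))=|\g|/2\le\operatorname{dist}(\mu,[u_0,v_0])+\d$ together with $\operatorname{dist}(\mu,[u_0,v_0])\le\d$ — valid because $\mu$ lies between $u_0-\d\le\mu^-$ and $v_0+\d\ge\mu^+$, so $\mu\in[u_0-\d,v_0+\d]$, whence $\operatorname{dist}(\mu,[u_0,v_0])\le\d$ — yields $|\g|/2\le2\d$; to recover the sharp $|\g|\le2\d$ one instead uses that $\mu\notin\s(T)$ implies $\mu\in\varrho(T_0)$ fails, i.e.\ $\mu\in[u_0,v_0]$, so $\operatorname{dist}(\mu,[u_0,v_0])=0$ and $|\g|/2\le\d$ directly). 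I would present the sharp version: show $\mu\in[u_0,v_0]$ by contradiction via the reversed Neumann estimate, then conclude $|\g|=2\operatorname{dist}(\mu,\s(T))\le2\operatorname{dist}(\mu,\s(T_0))+2\d=0+2\d$ using the inclusion $\s(T)\subset\{z:\operatorname{dist}(z,\s(T_0))\le\d\}$ applied at the endpoints.
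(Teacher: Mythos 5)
Your argument is correct and achieves the sharp bound $2\delta$, but it is packaged differently from the paper. The paper argues by contradiction at the midpoint $w$ of a putative long gap: it first places $w$ inside $[u_0,v_0]=\sigma(T_0)$ using the convex hull of $\sigma(T)$, then uses the resolvent identity at $z=w+i\varepsilon$ with a Neumann-type bound to show $\|R(z,T_0)\|$ stays bounded as $\varepsilon\to0$, contradicting $\|R(w+i\varepsilon,T_0)\|=\varepsilon^{-1}$. You instead first extract the standard two-sided spectral inclusion for self-adjoint operators (each spectrum lies in the closed $\delta$-neighbourhood of the other, via the identity $\|(A-z)^{-1}\|=\mathrm{dist}(z,\sigma(A))^{-1}$ and a Neumann series), and then finish with purely elementary interval geometry: the endpoints $\mu^\pm\in\sigma(T)$ force points of $[u_0,v_0]$ within $\delta$ on both sides of the gap, while the trimmed interval $(\mu^-+\delta,\mu^+-\delta)$ is disjoint from the $\delta$-neighbourhood of $\sigma(T)$ and hence from $[u_0,v_0]$; connectedness of $[u_0,v_0]$ then forces this trimmed interval to be empty, i.e.\ $\mu^+-\mu^-\le2\delta$. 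In effect, the paper proves the inclusion $\sigma(T_0)\subset\{\mathrm{dist}(\cdot,\sigma(T))\le\delta\}$ by hand at a single point, whereas you invoke it globally; your route avoids the $\varepsilon\to0$ boundary argument and the auxiliary parameter $\tau$, at the cost of quoting (or re-deriving) the Hausdorff-distance fact. Two caveats, neither fatal since your designated ``robust formulation'' is self-contained: the dichotomy paragraph contains the faulty implication that $\mu\in\varrho(T_0)$ alone yields a $\delta$-neighbourhood of $\mu$ in $\varrho(T)$ (one needs $\mathrm{dist}(\mu,\sigma(T_0))>\delta$ for that), and the closing ``sharp version'' is under-justified: placing the midpoint in $[u_0,v_0]$ is not a consequence of the reversed Neumann estimate by itself, but of $\sigma(T)\subset[u_0-\delta,v_0+\delta]$ together with the assumption $\mu^+-\mu^->2\delta$ (which is exactly how the paper does it); also the final triangle inequality $\mathrm{dist}(\mu,\sigma(T))\le\mathrm{dist}(\mu,\sigma(T_0))+\delta$ rests on the inclusion $\sigma(T_0)\subset\{\mathrm{dist}(\cdot,\sigma(T))\le\delta\}$, not on the one you cite there.
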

\begin{proof}
Denote by $[u,v]$ the convex hall of $\s(T)$, i.e., the least closed interval, which contains $\s(T)$. By the hypothesis on the perturbation,
we have
$$ u_0\le u+\d, \qquad v_0\ge v-\d. $$
Assume, on the contrary, that $\mu^+-\mu^->2\d$. Take the middle point of the gap $w=(\mu^-+\mu^+)/2$, so
\begin{equation*}
\begin{split}
w-\mu^- >\d \ \ &\Rightarrow \ \ w-u>\d, \quad w>u+\d\ge u_0, \\
\mu^+ -w >\d \ \ &\Rightarrow \ \ v-w>\d, \quad w<v-\d\le v_0,
\end{split}
\end{equation*}
and therefore, $w\in[u_0,v_0]=\s(T_0)$.

Pick a positive number $\tau$ so that
\begin{equation}\label{ta}
0<\tau<\frac{\mu^+-\mu^- -2\d}{2\d}\,, \quad \frac{\mu^+-\mu^-}2>(1+\tau)\d.
\end{equation}
We invoke the resolvent operators $R(z,T_0)=(T_0-z)^{-1}$, $R(z,T)=(T-z)^{-1}$, and write the equalities
\begin{equation*}%\label{res1}
\begin{split}
R(z,T)-R(z,T_0) &=-R(z,T)(T-T_0)R(z,T_0), \\
R(z,T) &=\lp I-R(z,T)(T-T_0)\rp\,R(z,T_0).
\end{split}
\end{equation*}
Put $z=w+i\ep$, $\ep>0$ so, by \eqref{ta},
\begin{equation*}
\begin{split}
\|R(z,T)\|^{-1} &={\rm dist}(z,\s(T))\ge\sqrt{(1+\tau)^2\d^2+\ep^2}>(1+\tau)\d, \\
\|R(z,T)\| &< \frac1{(1+\tau)\d}\,.
\end{split}
\end{equation*}
Then
$$ \|R(z,T)(T-T_0)\|<\frac1{1+\tau}, $$
and the operator $I-R(z,T)(T-T_0)$ is invertible with
$$ \|\bigl(I-R(z,T)(T-T_0)\bigr)^{-1}\|\le \frac{1+\tau}{\tau}. $$
Consequently,
\begin{equation*}
\begin{split}
R(z,T_0) &=\bigl(I-R(z,T)(T-T_0)\bigr)^{-1}\,R(z,T), \\
\|R(z,T_0)\| &\le \|\bigl(I-R(z,T)(T-T_0)\bigr)^{-1}\|\,\|R(z,T)\|\le \frac1{\tau\d}\,.
\end{split}
\end{equation*}

On the other hand, $w\in\s(T_0)$ implies
$$ \|R(w+i\ep,T_0)\|=\ep^{-1}\to +\infty,  \qquad \ep\to 0+. $$
The contradiction completes the proof.
\end{proof}

\begin{proposition}\label{stbh}
Let $J$ be a bounded Jacobi matrix $\eqref{defjac}$, and $(\mu^-,\mu^+)$ be a gap in its spectrum. Then
$$ \mu^+-\mu^-\le 2(\o_b+2\o_a). $$
\end{proposition}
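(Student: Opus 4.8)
The plan is to read off the bound from Lemma~\ref{perspe}, taking for $T_0$ a constant Jacobi matrix. Concretely, I would fix a number $a_0>0$ and a real $b_0$, and let $J_0$ be the Jacobi matrix with all diagonal entries equal to $b_0$ and all off-diagonal entries equal to $a_0$. Then $J_0$ is bounded and self-adjoint, and, as recalled in the Introduction, $\s(J_0)=[b_0-2a_0,\,b_0+2a_0]$ is a single interval. Hence Lemma~\ref{perspe} applies to the pair $(J_0,J)$ the moment we know $\|J-J_0\|\le\d$, and it yields exactly: every gap of $\s(J)$ has length at most $2\d$. So the whole task is to choose $a_0,b_0$ and estimate $\|J-J_0\|$.

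For the norm estimate I would observe that $J-J_0$ is the bounded three-diagonal operator on $\ell^2(\bz)$ with diagonal entries $b_n-b_0$ and off-diagonal entries $a_n-a_0$, and split it as $B+(W+W^{*})$, where $B$ is the diagonal multiplication operator with entries $\{b_n-b_0\}$ and $W$ is the bilateral weighted shift $(Wy)_n=(a_{n-1}-a_0)y_{n-1}$. Since $\|B\|=\sup_n|b_n-b_0|$ and $\|W\|=\|W^{*}\|=\sup_n|a_n-a_0|$, the triangle inequality gives
\begin{equation*}
\|J-J_0\|\le\sup_n|b_n-b_0|+2\sup_n|a_n-a_0|.
\end{equation*}
With the cheap choice $b_0:=b_k$, $a_0:=a_k$ for any fixed $k$ one has $\sup_n|b_n-b_0|\le\o_b$ and $\sup_n|a_n-a_0|\le\o_a$, hence $\|J-J_0\|\le\o_b+2\o_a=:\d$, and Lemma~\ref{perspe} delivers $\mu^+-\mu^-\le2\d=2(\o_b+2\o_a)$, which is the claim. (Taking instead $b_0$ and $a_0$ to be the midpoints of the ranges of $\{b_j\}$ and $\{a_j\}$ halves both suprema and in fact yields the sharper $\mu^+-\mu^-\le\o_b+2\o_a$; the crude choice already suffices for the statement as written. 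In either case $a_0>0$, so $J_0$ is a genuine constant Jacobi matrix and no degenerate-interval issue arises, the sequences $\{a_j\},\{b_j\}$ being bounded because $J$ is.)

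The only step that is not pure bookkeeping is the operator-norm identity $\|W\|=\sup_n|w_n|$ for a bilateral weighted shift on $\ell^2(\bz)$ — an elementary computation, $\|Wy\|^2=\sum_n|w_n|^2|y_{n-1}|^2$ with the supremum approached by concentrating $y$ near an index where $|w_n|$ is nearly maximal — which is precisely what produces the factor $2$ in front of $\o_a$. Everything else is assembling these pieces and quoting Lemma~\ref{perspe}, so I do not anticipate any real obstacle; the main thing to get right is the correspondence between the off-diagonal part of $J-J_0$ and $W+W^{*}$.
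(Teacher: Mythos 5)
Your proposal is correct and follows essentially the same route as the paper: apply Lemma~\ref{perspe} with a constant Jacobi matrix $T_0$ and bound $\|J-T_0\|$ by $\sup_n|b_n-b_0|+2\sup_n|a_n-a_0|\le\o_b+2\o_a$ (the paper cites this norm estimate from \cite{simsz}, while you verify it via the diagonal-plus-shift decomposition, and your choice $b_0=b_k$, $a_0=a_k$ is a special case of the paper's choice of constants within the ranges of the entries). Your midpoint remark giving the sharper bound $\o_b+2\o_a$ is a nice extra, but the argument is otherwise the paper's own.
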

\begin{proof}
We apply Lemma \ref{perspe} for the case $T=J$, $T_0=J_{\a,\b}$, a constant Jacobi matrix with $\a$'s
along the main diagonal and $\b$'s along the off-diagonals, where $\a$ and $\b$ are suitable constants. Then
\begin{equation*}
J-J_{\a,\b} =
\begin{bmatrix}
\ddots & \ddots & \ddots &  & \\
 & a_{-1}-\a & b_0-\b & a_0-\a &  & \\
 & & a_0-\a & b_1-\b & a_1-a & & \\
 & &  & a_1-\a & b_2-\b & a_2-\a & \\
 & &  & & \ddots & \ddots & \ddots
\end{bmatrix},
\end{equation*}
and, as is well known (see, e.g., \cite[formula (1.3.29)]{simsz}),
$$ \|J-J_{\a,\b}\|\le \sup_j|b_j-\b|+2\sup_j|a_j-\a|. $$
Take $\inf_j b_j\le\b\le\sup_j b_j$, $\inf_j a_j\le\a\le\sup_j a_j$, so for each $k\in\bz$
\begin{equation*}
\begin{split}
|b_k-\b| &\le \max(b_k-\inf_j b_j, \sup_j b_j-b_k)\le\o_b, \\
|a_k-\a| &\le \max(a_k-\inf_j a_j, \sup_j a_j-a_k)\le\o_a.
\end{split}
\end{equation*}
It follows that $\|J-J_{\a,\b}\|\le \o_b+2\o_a$, and application of Lemma \ref{perspe} completes the proof.
\end{proof}

The lower bound \eqref{lobo} is a straightforward consequence of the latter result.

\begin{remark}
The Borg--Hochstadt theorem is known to hold for a wider class of reflectionless Jacobi matrices \cite[Corollary 8.6]{tesc}.
One may conjecture that its quantitative form, Theorem \ref{main}, remains valid in this setting (at least in the finite-gap
case) as well.
\end{remark}

\end{document}